%submitted to Studia Mathemtica 24 July 2017
%revised version submitted 11 Dec 2017
\documentclass[12pt]{amsart}

\usepackage{amssymb}
\usepackage{enumerate}

\makeatletter
\@namedef{subjclassname@2010}{%
  \textup{2010} Mathematics Subject Classification}
\makeatother

\theoremstyle{plain}
\newtheorem{theorem}{Theorem}[section]
\newtheorem{lemma}[theorem]{Lemma}
\newtheorem{corollary}[theorem]{Corollary}
\newtheorem{question}[theorem]{Question}

%%%%%%%%%%% For IMPAN journals:

\frenchspacing

\textwidth=13.5cm
\textheight=23cm
\parindent=16pt
\oddsidemargin=-0.5cm
\evensidemargin=-0.5cm
\topmargin=-0.5cm

%%%% Put your macros here:

\newcommand{\CC}{{\mathbb C}}
\newcommand{\DD}{{\mathbb D}}

\newcommand{\TT}{{\mathbb T}}
\newcommand{\cB}{{\mathcal B}}
\newcommand{\cD}{{\mathcal D}}
\newcommand{\cH}{{\mathcal H}}
\DeclareMathOperator{\hol}{\mathrm Hol}
\DeclareMathOperator{\bmoa}{\mathrm BMOA}
\DeclareMathOperator{\vmoa}{\mathrm VMOA}

%%%%%%%%%%%%%

\begin{document}

%%%%% To ease editing, for IMPAN journals add:

\baselineskip=17pt

%%%%%%%%%%%

%% In the running head, replace first names by initials 
%% and give an abbreviation of the title.

\title{Linear maps preserving inner functions}

\author[J. Mashreghi]{Javad Mashreghi}
\address{D\'epartement de math\'ematiques et de statistique, Universit\'e Laval,
Qu\'ebec City (Qu\'ebec),  Canada G1V 0A6.}
\email{javad.mashreghi@mat.ulaval.ca}

\author[T. Ransford]{Thomas Ransford}
\address{D\'epartement de math\'ematiques et de statistique, Universit\'e Laval,
Qu\'ebec City (Qu\'ebec),  Canada G1V 0A6.}
\email{thomas.ransford@mat.ulaval.ca}

\date{}

\begin{abstract}
We show that, for many  holomorphic function spaces on the unit disk,
a continuous endomorphism  that sends inner functions to inner functions 
is necessarily a weighted composition operator.
\end{abstract}

\subjclass[2010]{Primary 15A86; Secondary  30J05, 47B33}

\keywords{Inner function, Linear preserver, Composition operator}

\maketitle

%%%%%%%%%%%%%%%%%%%%%%%%%%%%%%%%%%%%

\section{Introduction and statement of results}\label{S:intro}

Throughout this article, $\DD$ denotes the open unit disk, $\TT$ is the unit circle, and  $\hol(\DD)$
denotes the space of holomorphic functions on $\DD$, endowed with the
usual topology of uniform convergence on compact sets.

In an earlier work \cite{MR15}, 
the authors used  a version of the Gleason--Kahane--\.Zelazko theorem
to deduce a result about linear maps  that preserve outer functions.
(We shall recall the definition of outer functions in \S\ref{S:io}.)
More precisely, it was shown in \cite[Theorem~3.2]{MR15} that,
if $X$ is a Banach space of holomorphic functions on $\DD$
satisfying certain natural requirements, and if $T:X\to\hol(\DD)$
is a continuous linear map that sends outer functions to outer functions,
then $T$ is necessarily a weighted composition operator. In other words, 
there exist holomorphic functions $\phi:\DD\to\DD$ and $\psi:\DD\to\CC$ such that
\[
Tf=\psi .(f\circ\phi) \quad(f\in X).
\]

At a recent presentation of this work,
a member of the audience raised the question as to what happens
if one replaces `outer' with `inner' throughout. 
(A brief account of inner functions will also
be given in \S\ref{S:io}.) 
The purpose of the present note is to answer this question.
Curiously, the answer is the same: 
a linear map that preserves inner functions has to be a weighted composition operator,
although this time $\phi$ and $\psi$ have to be inner functions. 
Also, the method of proof is completely different.

We now state our results in more precisely.
We consider a Banach space $X$ of holomorphic functions on $\DD$
 satisfying the following conditions:
\begin{enumerate}[(X1)]
\setlength{\itemindent}{12pt}
\item The inclusion map $X\to\hol(\DD)$ is continuous.
\item The polynomials are contained in $X$ and are dense in $X$.
\item $\limsup_{n\to\infty}\|z^n\|_X^{1/n}\le1$.
\end{enumerate}

Examples of spaces $X$ satisfying (X1)--(X3) include:
\begin{itemize}
\item the Hardy spaces $H^p~(1\le p<\infty)$;
\item the Bergman spaces $A^p~(1\le p<\infty)$;
\item the holomorphic Besov spaces $B_p~(1\le p<\infty)$;
\item the weighted Dirichlet spaces $\cD_\alpha~(0\le\alpha\le1)$;
\item the harmonically weighted Dirichlet spaces $\cD(\mu)$;
\item the holomorphic Sobolev spaces $S^p:=\{f:f'\in H^p\}~ (1\le p<\infty)$;
\item the disk algebra $A(\DD)$;
\item the little Bloch space $\cB_0$;
\item the  space $\vmoa$ of functions of vanishing mean oscillation;
\item the de Branges--Rovnyak spaces $\cH(b)$, for non-extreme points $b$  in the unit ball of $H^\infty$.
\end{itemize}
For background on these various spaces, we refer to the books \cite{EKMR14,FM16,Ga07,Zh07}.

The following theorem is our main result.
The conditions (X1)--(X3) easily imply that $X$ contains all rational functions with poles
outside $\overline{\DD}$, and in particular all finite Blaschke products, 
so the statement of the theorem makes sense.

\begin{theorem}\label{T:main}
Let $X\subset\hol(\DD)$ be a Banach space satisfying (X1)--(X3).
Let $T:X\to\hol(\DD)$ be a continuous linear map that maps finite Blaschke products to inner functions. 
\begin{enumerate}[(i)]
\item If $\dim(T(X))>1$, then
 there exist inner functions $\phi,\psi$, with $\phi$ non-constant, such that
\begin{equation}\label{E:main}
Tf=\psi.(f\circ \phi) \quad(f\in X).
\end{equation}
\item If $\dim(T(X))=1$, then there exist an inner function $\psi$ and a point $\alpha\in\TT$
such that
\begin{equation}\label{E:dim1}
Tf=f(\alpha)\psi \quad(f\in X\cap \hol(\overline{\DD})).
\end{equation}
\end{enumerate}
\end{theorem}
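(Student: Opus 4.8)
The plan is to exploit the rich multiplicative structure of finite Blaschke products. The key observation is that if $B_1, B_2$ are finite Blaschke products, then so is their product $B_1 B_2$, and more importantly, for any $\alpha \in \TT$ and any finite Blaschke product $B$, the function $\alpha B$ is inner, as is any finite product of Möbius automorphisms. I would first analyze how $T$ acts on the simplest inner functions, the disk automorphisms $b_a(z) = (z-a)/(1-\bar a z)$ for $a \in \DD$, together with the unimodular constants. Since $|\alpha| = 1$ means the constant function $\alpha$ is inner (a degenerate Blaschke product), and $T$ is linear, understanding $T(1)$ will be central: writing $\psi := T(1)$, hypothesis forces $\psi$ to be inner.

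The heart of the argument should be a rigidity phenomenon coming from the fact that the set of inner functions, while not a linear space, constrains $T$ through \emph{pointwise} modulus-one conditions on $\TT$. The strategy I would pursue is to consider, for a fixed point $w \in \DD$, the evaluation functional $f \mapsto (Tf)(w)$, and to study the curve traced in $\CC$ as we vary a finite Blaschke product through a one-parameter family. Concretely, for a one-parameter family $B_t$ of Blaschke products (say rotations $e^{it}B$ or convex-type combinations that remain Blaschke), the images $T(B_t)$ must all be inner, so their boundary values have modulus one a.e.; differentiating or comparing these modulus-one constraints should pin down the structure. A cleaner route exploits that linear combinations such as $\lambda B_1 + \mu B_2$ are almost never inner, so the preservation hypothesis, combined with the identity $\|$\,inner function\,$\|_\infty = 1$ and the maximum principle, should force $T$ to respect the algebra of Blaschke products in a way that factors through composition.

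The decisive step, I expect, is to establish that $T$ is \emph{multiplicative up to the weight} $\psi = T(1)$ on finite Blaschke products: that is, $T(B_1 B_2)\,T(1) = T(B_1)\,T(B_2)$, or equivalently that $(Tf)/\psi$ is multiplicative on Blaschke products. Once one has a multiplicative functional-like behavior on the semigroup of finite Blaschke products, the map $B \mapsto (TB)/\psi$ should coincide with $B \mapsto B\circ\phi$ for a fixed holomorphic self-map $\phi$ of $\DD$; evaluating at the automorphisms $b_a$ recovers $\phi$ explicitly via $\phi(z) = $ the image of the single-zero Blaschke factor. Density of polynomials (X2) together with continuity of $T$ and of the inclusion (X1) then propagates the formula $Tf = \psi\,(f\circ\phi)$ from Blaschke products to all of $X$, since finite linear combinations of Blaschke products (or their closed span) are dense enough to determine a continuous operator. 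That $\phi$ and $\psi$ are themselves inner follows from $\psi = T(1)$ being inner and from $\phi = (T b_0)/\psi$ being an inner self-map of $\DD$.

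The main obstacle will be the multiplicativity step, since inner functions do not form a linear space and the preservation hypothesis is not obviously multiplicative; I anticipate needing a Gleason--Kahane--\.Zelazko-type argument adapted to the boundary modulus-one condition, or a careful zero-counting/argument-principle computation showing that the only way $\lambda\,T(B_1) + \mu\,T(B_2)$ can be inner for the required range of $\lambda,\mu$ (dictated by which combinations $\lambda B_1 + \mu B_2$ are themselves Blaschke) is for $T$ to act as claimed. The dichotomy between $\dim(T(X))>1$ and $\dim(T(X))=1$ will emerge naturally: the degenerate case is precisely when $\phi$ is forced to be constant (so $B\circ\phi = B(\alpha)$ for a boundary point $\alpha \in \TT$), giving the evaluation form \eqref{E:dim1} on functions holomorphic up to the boundary.
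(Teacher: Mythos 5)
Your outline correctly identifies the target structure ($\psi=T(1)$ inner, the weighted-composition form, the density-plus-continuity extension, and the dichotomy between non-constant $\phi$ and constant unimodular $\phi$ giving cases (i) and (ii)), but the decisive step is missing, and you say so yourself: the ``multiplicativity up to the weight'' $T(B_1B_2)\,T(1)=T(B_1)\,T(B_2)$ is essentially as strong as the theorem itself, and the proposal offers only the hope of ``a Gleason--Kahane--\.Zelazko-type argument'' or ``zero-counting'' to establish it. Neither suggestion is worked out, and neither is likely to succeed as stated: the GKZ machinery is what the authors used for \emph{outer}-function preservers in their earlier work, and they remark explicitly that the inner case requires a completely different method; moreover, the hypothesis gives no control over which combinations $\lambda T(B_1)+\mu T(B_2)$ are inner, since it only constrains images of functions that are themselves finite Blaschke products. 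The paper's actual proof supplies the missing step by a concrete computation: writing $h_k:=T(z^k)$ and expanding $T(\phi_a)$, where $\phi_a(z)=(a-z)/(1-\overline{a}z)$ and $a=r\zeta$, via the series $\phi_a=\sum_{k\ge0}(a\overline{a}^kz^k-\overline{a}^kz^{k+1})$, which converges in $X$ by (X3); then imposing $|(T\phi_a)^*|=1$ a.e.\ on $\TT$ for a dense two-parameter family of $(r,\zeta)$ and comparing coefficients of $r^n\zeta^m$. This yields $h_{k-1}^*\overline{h_{l-1}^*}+h_{k+1}^*\overline{h_{l+1}^*}=2h_k^*\overline{h_l^*}$, whence (two unimodular numbers summing to a number of modulus $2$ must coincide) $h_k^*=h_0^*(h_1^*/h_0^*)^k$, i.e.\ $T(z^k)=\psi\phi^k$: multiplicativity on the powers of $z$ alone, which by linearity, density and continuity is all that is needed. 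Your proposal contains no substitute for this computation.

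There is a second gap: you assert that $\phi=(Tb_0)/\psi=T(z)/T(1)$ is ``an inner self-map of $\DD$,'' but the quotient of two inner functions is in general not even holomorphic on $\DD$ (the denominator may have zeros, or a singular factor, not shared by the numerator). Ruling this out is a genuine issue; the paper does it with a separate lemma (Lemma~\ref{L:quotient}), which uses the full strength of the relations $h_k=h_0(h_1/h_0)^k$ for \emph{all} $k\ge0$: comparing the zero-plus-singular measures of $h_k$ and letting $k\to\infty$ shows that the measure attached to $h_1$ dominates that of $h_0$, so that the quotient is inner. Without an argument of this kind, the formula $Tf=\psi.(f\circ\phi)$ does not even make sense.
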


Here $\hol(\overline{\DD})$ denotes the set of functions holomorphic on a neighborhood of $\overline{\DD}$.
It is included in \eqref{E:dim1} to ensure that $f(\alpha)$ makes sense. Note that, since $T$ is continuous, \eqref{E:dim1} implies that there exists a constant $C$ such that $|f(\alpha)|\le C\|f\|_X$ for all $f\in X\cap\hol(\overline{\DD})$. Many spaces $X$ do not have this property for any $\alpha\in\TT$, and so for these spaces the case~(ii) never occurs. In particular, this is the situation when $X=H^p,A^p,\cD_\alpha,\cB_0$ and $\vmoa$.
On the other hand, there are spaces for which case~(ii) does occur, notably $A(\DD)$.

Note also that we do not need to assume that $T$ sends general inner functions to inner functions.
In fact it is a consequence of the theorem. This forms part of the next result.

\begin{corollary}\label{C:inner}
Let $X$ and $T$ be as in Theorem~\ref{T:main}(i).
If $f\in X$ is inner, then so is $Tf$. Conversely,
 if  $Tf$ is inner, then so is $f$.
\end{corollary}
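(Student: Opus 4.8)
The plan is to prove Corollary~\ref{C:inner} as a direct consequence of the factorization $Tf=\psi.(f\circ\phi)$ furnished by Theorem~\ref{T:main}(i), where $\phi,\psi$ are inner and $\phi$ is non-constant. Recall that a function $g\in\hol(\DD)$ is inner precisely when $g\in H^\infty$ with $\|g\|_\infty\le1$ and $|g(\zeta)|=1$ for almost every $\zeta\in\TT$ (boundary values taken radially). The key structural fact I would exploit is that the nontangential boundary behaviour of $f\circ\phi$ is governed by that of $f$ via the boundary map of $\phi$.

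\begin{proof}
By Theorem~\ref{T:main}(i) there exist inner functions $\phi$ and $\psi$, with $\phi$ non-constant, such that $Tf=\psi.(f\circ\phi)$ for all $f\in X$.

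Suppose first that $f\in X$ is inner. Then $f\circ\phi$ is a composition of two inner functions, hence itself inner: indeed $f\circ\phi\in H^\infty$ with $\|f\circ\phi\|_\infty\le\|f\|_\infty\le1$, and since $\phi$ is a non-constant inner function, for almost every $\zeta\in\TT$ the radial limit $\phi(\zeta)$ exists and lies in $\TT$, while the radial limit of $f\circ\phi$ at $\zeta$ equals the nontangential limit of $f$ at $\phi(\zeta)$. As $f$ is inner, $|f|=1$ almost everywhere on $\TT$; because $\phi$ is a non-constant inner function, its boundary values cannot be supported on any set of Lebesgue measure zero in $\TT$, so the pullback under the boundary map of $\phi$ of a full-measure subset of $\TT$ is again of full measure. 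Consequently $|f\circ\phi|=1$ almost everywhere on $\TT$, and $f\circ\phi$ is inner. Since $\psi$ is inner, the product $Tf=\psi.(f\circ\phi)$ is inner as well.

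Conversely, suppose $Tf=\psi.(f\circ\phi)$ is inner. Then $|Tf|=1$ almost everywhere on $\TT$, and since $|\psi|=1$ almost everywhere, we deduce $|f\circ\phi|=1$ almost everywhere on $\TT$. By the same boundary-correspondence argument, this forces $|f|=1$ almost everywhere on $\TT$. It remains to check that $f\in H^\infty$ with $\|f\|_\infty\le1$; this follows because $f\circ\phi\in H^\infty$ with $\|f\circ\phi\|_\infty\le1$, and the non-constant inner function $\phi$ maps $\DD$ onto a dense subset of $\DD$ (its range omits at most a set of capacity zero), so $\sup_{\DD}|f|=\sup_{\DD}|f\circ\phi|\le1$. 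Hence $f$ is inner.
\end{proof}

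The main obstacle I anticipate is the boundary-correspondence step: justifying rigorously that the radial limit of $f\circ\phi$ at $\zeta\in\TT$ equals the nontangential limit of $f$ at $\phi(\zeta)$, and that the boundary map of a non-constant inner function pulls back null sets to null sets. This rests on Löwner's theorem (the boundary function of an inner function preserves Lebesgue measure class) together with the fact that $\phi$ has nontangential limits of modulus one almost everywhere; the delicate point is ensuring that the set where $f$ fails to have the right boundary modulus, having measure zero, pulls back under $\phi$ to a set of measure zero, which is exactly where the absolute continuity properties of the boundary map of an inner function are needed.
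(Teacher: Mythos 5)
You follow essentially the same route as the paper: the forward direction is Theorem~\ref{T:Lindelof} (which you re-derive rather than cite) plus the fact that products of inner functions are inner, and the converse uses division by $\psi$, Frostman's theorem (Theorem~\ref{T:Frostman}) to see that $\phi(\DD)$ is dense in $\DD$, and a transfer of the bound from $f\circ\phi$ to $f$. The ingredients are the right ones, but your converse has a genuine ordering flaw: you invoke ``the same boundary-correspondence argument'' to conclude $|f^*|=1$ a.e.\ \emph{before} knowing that $f$ is bounded. Lindel\"of's theorem --- and already Fatou's theorem, which is what supplies the existence of the boundary function $f^*$ --- requires $f\in H^\infty$ (or at least $f$ of bounded characteristic); a general $f\in X$, say for $X$ a Bergman space, need not have a radial limit at a single point of $\TT$, so at that stage of your argument the symbol $f^*$ is undefined and the step is vacuous. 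This is precisely the trap the paper flags (``If we knew that $f$ was bounded\dots However, all we know, \textit{a priori}, is that $f$ is bounded on $\phi(\DD)$'') and avoids by establishing $\sup_\DD|f|\le1$ \emph{first}, via density of $\phi(\DD)$, and only then applying Theorem~\ref{T:Lindelof}. Your proof is repaired by the same reordering.

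There is a second soft spot: you assert $f\circ\phi\in H^\infty$ with $\|f\circ\phi\|_\infty\le1$, but what your division argument actually yields is only $|(f\circ\phi)^*|=1$ a.e.; inside $\DD$ it gives merely $|f\circ\phi|\le 1/|\psi|$. A holomorphic function with unimodular radial limits a.e.\ need not be bounded: $1/S$, with $S$ a singular inner function, is holomorphic in $\DD$ and satisfies $S\cdot(1/S)=1$ with both $S$ and $1$ inner, yet $1/S$ is unbounded. So ``$Tf$ inner and $\psi$ inner imply $f\circ\phi$ inner'' is not a formal consequence of the definitions; it needs an argument (when $f\circ\phi$ is known to lie in $N^+$ it follows from uniqueness of inner--outer factorization, but for spaces $X$ not contained in the Smirnov class, such as $A^p$, this is a real issue). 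To be fair, the paper's own proof makes the identical leap with the word ``clearly,'' so on this point you are no worse off than the authors; but you should be aware that this, together with the boundedness question above, is where the genuine content of the converse lies.
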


Under mild extra assumptions on $T$, we can say more about the functions $\phi$ and $\psi$.
Let $N^+$ denote the Smirnov class, namely the set of all functions $f$ expressible as $f=hg$,
where $h$ is inner and $g$ is outer (together with the function $f\equiv0$).
We remark that $N^+$ contains most of the spaces listed earlier,
namely: $H^p,B_p,\cD_\alpha,\cD(\mu),S^p,A(\DD),\vmoa$ and $\cH(b)$, but not $A^p$ or $\cB_0$.
Further details about $N^+$ are given in \S\ref{S:io}.

\begin{theorem}\label{T:surj}
Let $X,T,\phi$ and $\psi$ be as in Theorem~\ref{T:main}(i).
\begin{enumerate}[(i)]
\item If  $T(X\cap N^+)$ contains an outer function, then $\psi$ is a  unimodular constant.
\item If, further, $T(X)$ separates point of $\DD$, then $\phi$ is an automorphism of $\DD$.
\end{enumerate}
\end{theorem}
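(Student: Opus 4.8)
The plan is to read off both conclusions from the factorization $Tf=\psi\,(f\circ\phi)$ supplied by Theorem~\ref{T:main}(i), using inner--outer factorization in $N^+$ for part~(i) and a univalence argument for part~(ii). For part~(i), fix $f_0\in X\cap N^+$ with $F:=Tf_0$ outer, as the hypothesis provides. Since $F=\psi\,(f_0\circ\phi)$ with $\psi$ inner and $f_0\circ\phi\in\hol(\DD)$, I would argue that $\psi$ must divide the inner part of $F$. Concretely: an outer function lies in $N^+$ and has trivial inner part; and whenever $F\in N^+$ factors as $F=\psi G$ with $\psi$ inner and $G$ holomorphic on $\DD$, the quotient $G=F/\psi$ again lies in $N^+$ and the inner part of $F$ equals $\psi$ times the inner part of $G$. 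Applying this with $G=f_0\circ\phi$, the inner part of $F$ is divisible by $\psi$; but $F$ is outer, so its inner part is a unimodular constant. Hence $\psi$ is itself a unimodular constant. (Equivalently, if $\psi$ and $k$ are inner and $\psi k$ is a unimodular constant, then $|\psi|\equiv1$ on $\DD$, forcing $\psi$ to be constant by the maximum principle.) This settles~(i).

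For part~(ii), I would first reduce to a statement about $\phi$ alone. By part~(i) we may take $\psi$ to be a unimodular constant, so that $Tf=\psi\,(f\circ\phi)$; since multiplication by the nonzero constant $\psi$ does not affect separation, $T(X)$ separates the points of $\DD$ if and only if the family $\{f\circ\phi:f\in X\}$ does. If $\phi(z_1)=\phi(z_2)$ for some $z_1\ne z_2$ in $\DD$, then $(f\circ\phi)(z_1)=(f\circ\phi)(z_2)$ for every $f\in X$, contradicting separation; hence $\phi$ is injective. It then remains to show that a univalent inner function is an automorphism of $\DD$.

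For this last step, let $\Omega:=\phi(\DD)$, which is open (as $\phi$ is a non-constant holomorphic map) and, being the univalent image of $\DD$, simply connected. By Frostman's theorem a non-constant inner function omits at most a set of values of capacity zero, so $\DD\setminus\Omega$ is a relatively closed subset of $\DD$ of capacity zero, and in particular totally disconnected. But removing any nonempty such set from $\DD$ would leave a non-simply-connected domain (a single omitted point already destroys simple connectivity, while a continuum would carry positive capacity), contradicting the simple connectivity of $\Omega$ unless $\DD\setminus\Omega=\emptyset$. Thus $\Omega=\DD$, so $\phi\colon\DD\to\DD$ is a conformal bijection, hence a M\"obius automorphism of $\DD$.

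I expect the genuine work to lie in this final implication, that a univalent inner function is an automorphism; the reductions preceding it are formal. The delicate point is excluding the possibility that $\phi$ could be univalent while omitting a nonempty (capacity-zero) subset of $\DD$, and this is exactly where the simple connectivity of $\Omega$, together with Frostman's theorem, is decisive. An alternative route to $\Omega=\DD$ that I would keep in reserve is the conformal invariance of harmonic measure: since $\phi$ is inner its boundary values lie on $\TT$ almost everywhere, which forces the part of $\partial\Omega$ lying inside $\DD$ to have zero harmonic measure and hence $\Omega=\DD$.
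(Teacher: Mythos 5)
Your part (i) rests on a false lemma. You claim that whenever $F\in N^+$ factors as $F=\psi G$ with $\psi$ inner and $G$ merely holomorphic on $\DD$, then $G\in N^+$ and the inner part of $F$ equals $\psi$ times the inner part of $G$. This is not true: take $F\equiv1$ (outer), $\psi(z)=\exp\bigl(-(1+z)/(1-z)\bigr)$ (singular inner), and $G=1/\psi$, which is holomorphic on $\DD$ because $\psi$ has no zeros. Then $F=\psi G$, but $G\notin N^+$ (indeed $\log|G(0)|=1>0=\frac{1}{2\pi}\int_\TT\log|G^*|\,dt$, violating the Smirnov condition), and $\psi$ does not divide the trivial inner part of $F$. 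Holomorphy of $F/\psi$ only controls the Blaschke factor of $\psi$; it gives no information about its singular factor, and that is exactly the obstruction. What rescues your argument is that $G=f_0\circ\phi$ is not merely holomorphic: since $f_0\in N^+$ and $\phi$ is a holomorphic self-map of $\DD$, Theorem~\ref{T:N+comp} gives $f_0\circ\phi\in N^+$. Once this is known, writing $f_0\circ\phi=hg$ (inner times outer) yields $F=(\psi h)g$, and uniqueness of inner--outer factorization of the outer function $F$ forces $\psi h$, hence $\psi$, to be a unimodular constant. This is precisely the paper's proof; the step you elided---that composition with a holomorphic self-map preserves the Smirnov class---is the essential ingredient, not a formality.

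Your part (ii) is correct but takes a heavier route than the paper. After the (correct, and shared) reduction to showing that a univalent inner function is an automorphism, you invoke the strong form of Frostman's theorem (the omitted set $\DD\setminus\phi(\DD)$ has capacity zero) together with a plane-topology argument (removing a nonempty relatively closed polar set from $\DD$ destroys simple connectivity). This works, but note that the paper's Theorem~\ref{T:Frostman} is only the weak form (a dense set of good $a$), and it already suffices: choose $a$ so that $\phi_a\circ\phi$ is a Blaschke product; injectivity of $\phi$ makes $\phi_a\circ\phi$ an injective Blaschke product, hence of degree one, i.e.\ an automorphism of $\DD$, and then $\phi=\phi_a\circ(\phi_a\circ\phi)$ is an automorphism. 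This one-line argument avoids capacity theory and the topological lemma entirely; your version, on the other hand, isolates the general fact that univalent inner functions are automorphisms, which has independent interest.
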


We next apply these results to some concrete function spaces.

For $1\le p<\infty$, the \emph{Hardy space} $H^p$ is the set of $f\in\hol(\DD)$
such that
\[
\|f\|_p:=\sup_{r<1}\Bigl(\frac{1}{2\pi}\int_\TT|f(re^{it})|^p\,dt\Bigr)^{1/p}<\infty.
\]

\begin{theorem}\label{T:Hp}
Let  $1\le p<\infty$. The following are equivalent:
\begin{enumerate}[(i)]
\item $T:H^p\to H^p$  is a continuous linear map sending inner functions to inner functions.
\item There exist inner functions $\psi,\phi$, with $\phi$ non-constant, such that 
\[
Tf=\psi.(f\circ\phi) \quad(f\in H^p).
\]
\end{enumerate}
If, further, $T(H^p)=H^p$, then $\psi$ is a unimodular constant and $\phi$ is an automorphism of $\DD$,
and consequently $T$ is invertible.
\end{theorem}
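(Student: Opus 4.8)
My plan is to obtain Theorem~\ref{T:Hp} as an application of Theorem~\ref{T:main} and Theorem~\ref{T:surj}, supplying by hand only the reverse implication and the two features specific to $H^p$: that $H^p$ satisfies (X1)--(X3), that point evaluations on $\TT$ are unbounded on $H^p$, and that $H^p\subset N^+$. Most of the work is then just assembling the general machinery.

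For (ii)$\Rightarrow$(i) I would check directly that $T=M_\psi C_\phi$ has the stated properties. Continuity follows because the composition operator $C_\phi:f\mapsto f\circ\phi$ is bounded on $H^p$ for \emph{any} holomorphic self-map $\phi$ of $\DD$ (Littlewood's subordination theorem), while multiplication by the inner function $\psi$ is a contraction on $H^p$ since $|\psi|\le1$ on $\DD$. To see that $T$ preserves innerness, I would invoke the classical fact that the composition $f\circ\phi$ of an inner function $f$ with a non-constant inner function $\phi$ is again inner, and then note that multiplying by the inner function $\psi$ preserves innerness.

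For (i)$\Rightarrow$(ii) I would apply Theorem~\ref{T:main} to $T:H^p\to H^p\subset\hol(\DD)$: since $T$ sends inner functions, in particular finite Blaschke products, to inner functions, one of the two alternatives of Theorem~\ref{T:main} holds. I must exclude $\dim(T(H^p))=1$. By \eqref{E:dim1} this would force a bound $|f(\alpha)|\le C\|f\|_p$ for all polynomials $f$ and some $\alpha\in\TT$; but the polynomials $f_n(z)=\bigl((1+\bar\alpha z)/2\bigr)^n$ satisfy $f_n(\alpha)=1$ while $\|f_n\|_p\to0$, so no such $\alpha$ exists. Hence $\dim(T(H^p))>1$, and Theorem~\ref{T:main}(i) yields the representation in (ii).

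Finally, assume $T(H^p)=H^p$. Since $H^p\subset N^+$ we have $T(H^p\cap N^+)=T(H^p)=H^p$, which contains the outer function $1$, so Theorem~\ref{T:surj}(i) gives that $\psi$ is a unimodular constant; moreover $H^p=T(H^p)$ separates the points of $\DD$ (already the function $z$ does), so Theorem~\ref{T:surj}(ii) gives that $\phi$ is an automorphism of $\DD$. With $\psi$ a unimodular constant and $\phi$ an automorphism, $T=M_\psi C_\phi$ is a bijection of $H^p$ with bounded inverse $C_{\phi^{-1}}M_{\overline\psi}$, hence invertible. The only genuine obstacle I anticipate lies in (ii)$\Rightarrow$(i): one must cite the correct classical inputs, namely Littlewood subordination for boundedness and the stability of innerness under composition, the latter resting on the measure-theoretic fact (Ryff's theorem) that the boundary map of a non-constant inner function carries null sets to null sets. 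Everything else is routine bookkeeping on top of the general theorems.
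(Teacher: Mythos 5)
Your proposal is correct and follows essentially the same route as the paper: Theorem~\ref{T:main} for (i)$\Rightarrow$(ii), Littlewood subordination together with the Lindel\"of-type result (Theorem~\ref{T:Lindelof}) for (ii)$\Rightarrow$(i), and Theorem~\ref{T:surj} with $H^p\subset N^+$ for the surjective case. The only difference is that you explicitly verify, via the polynomials $f_n(z)=\bigl((1+\overline{\alpha}z)/2\bigr)^n$, that case~(ii) of Theorem~\ref{T:main} cannot occur for $H^p$ --- a point the paper disposes of by citing its earlier remark that boundary point evaluations are unbounded on $H^p$ --- and that you exhibit the inverse $C_{\phi^{-1}}M_{\overline{\psi}}$ explicitly.
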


The \emph{Dirichlet space} $\cD$ consists of those $f\in\hol(\DD)$ such that
\[
\cD(f):=\frac{1}{\pi}\int_\DD|f'(z)|^2\,dA(z)<\infty.
\]
It is a subspace of $H^2$, and becomes a Hilbert space when endowed with the norm $\|\cdot\|_\cD$ defined by $\|f\|_{\cD}^2:=\|f\|_{H^2}^2+\cD(f)$.
The only inner functions in $\cD$ are finite Blaschke products 
(see e.g.\ \cite[Corollary~7.6.10]{EKMR14}), so it is natural to consider linear self-maps
of $\cD$ that preserve the set of finite Blaschke products.

\begin{theorem}\label{T:D}
The following are equivalent:
\begin{enumerate}[(i)]
\item $T:\cD\to\cD$ is a continuous linear map sending finite Blaschke products to finite Blaschke products.
\item There are finite Blaschke products $\phi,\psi$, with $\phi$ non-constant, such that
\[
Tf=\psi.(f\circ\phi) \quad(f\in \cD).
\]
\end{enumerate}
If, further, $T(\cD)=\cD$, then $\psi$ is a unimodular constant and $\phi$ is an automorphism of $\DD$,
and consequently $T$ is invertible.
\end{theorem}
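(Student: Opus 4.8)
The plan is to deduce Theorem~\ref{T:D} from the general Theorems~\ref{T:main} and~\ref{T:surj}, exploiting the special feature of $\cD$ that its only inner functions are finite Blaschke products \cite[Corollary~7.6.10]{EKMR14}. First I would record that $\cD$ satisfies (X1)--(X3): the continuous inclusions $\cD\hookrightarrow H^2\hookrightarrow\hol(\DD)$ give (X1), density of the polynomials gives (X2), and the identity $\|z^n\|_\cD^2=1+n$ gives $\|z^n\|_\cD^{1/n}\to1$, hence (X3). For the implication (i)$\Rightarrow$(ii), I observe that a finite Blaschke product is inner, so $T$ sends finite Blaschke products to inner functions and Theorem~\ref{T:main} applies. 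Since $T$ sends the constant $1$ to a finite Blaschke product we have $T\neq0$, so $\dim(T(\cD))\ge1$; and case~(ii) of Theorem~\ref{T:main} is impossible here, since it would require a bounded point evaluation $|f(\alpha)|\le C\|f\|_\cD$ at some $\alpha\in\TT$, a property which $\cD$ does not possess (as noted in the discussion after Theorem~\ref{T:main}). Thus $\dim(T(\cD))>1$ and Theorem~\ref{T:main}(i) produces inner functions $\phi,\psi$, with $\phi$ non-constant, such that $Tf=\psi.(f\circ\phi)$.

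The key step is then to upgrade ``inner'' to ``finite Blaschke product''. Since $T$ maps $\cD$ into $\cD$, both $T1=\psi.(1\circ\phi)=\psi$ and $Tz=\psi.(z\circ\phi)=\psi\phi$ belong to $\cD$. Each is a product of inner functions, hence inner, and an inner function lying in $\cD$ is a finite Blaschke product; so $\psi$ and $\psi\phi$ are finite Blaschke products. As the zeros of $\psi$ are among those of $\psi\phi$, the quotient $\phi=(\psi\phi)/\psi$ is again a finite Blaschke product. This yields~(ii).

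For the converse (ii)$\Rightarrow$(i) I would write $T=M_\psi C_\phi$ and verify two things. First, $T$ preserves finite Blaschke products: a composition of finite Blaschke products is a finite Blaschke product, and so is a product of two of them. Second, $T$ is a bounded operator on $\cD$: for a finite Blaschke product $\phi$ of degree $d$, the non-univalent change-of-variables formula together with the fact that the counting function $N_\phi$ is identically $d$ on $\DD$ gives $\cD(f\circ\phi)=d\,\cD(f)$, while Littlewood's subordination principle bounds $C_\phi$ on $H^2$; and $\psi$, being holomorphic on a neighborhood of $\overline{\DD}$, is a multiplier of $\cD$, so $M_\psi$ is bounded. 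Hence $T=M_\psi C_\phi\colon\cD\to\cD$ is continuous.

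Finally, for the surjective case I would invoke Theorem~\ref{T:surj}. Since $\cD\subset N^+$ and $T(\cD)=\cD$ contains outer functions (for instance the constant $1$), part~(i) forces $\psi$ to be a unimodular constant; and since $T(\cD)=\cD$ separates the points of $\DD$, part~(ii) forces $\phi$ to be an automorphism of $\DD$. Writing $\psi=\lambda$ with $|\lambda|=1$, the operator $T=\lambda C_\phi$ is then invertible with inverse $\overline{\lambda}\,C_{\phi^{-1}}$. I expect the main obstacle to be the upgrade step of the second paragraph, where the special structure of $\cD$ is essential, together with the boundedness verification in the converse; everything else is a direct appeal to the two general theorems.
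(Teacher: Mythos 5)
Your proposal is correct and follows essentially the same route as the paper: invoke Theorem~\ref{T:main} (ruling out its case~(ii) since $\cD$ has no bounded point evaluations on $\TT$), upgrade $\psi=T(1)$ and $\psi\phi=T(z)$ to finite Blaschke products and divide, prove the converse via $\cD(f\circ\phi)=\deg(\phi)\,\cD(f)$, and settle the surjective case with Theorem~\ref{T:surj}. Your extra details (the multiplier and Littlewood estimates, and deducing that $\psi$, $\psi\phi$ are finite Blaschke products from the fact that inner functions in $\cD$ are finite Blaschke products rather than directly from hypothesis~(i)) are harmless elaborations of the same argument.
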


We conclude by mentioning that there is an extensive literature about  composition operators
generated by inner functions. 
These tend to arise when considering composition operators that are also isometries
(see e.g.\ \cite{MV06}).
It is interesting that the same operators also arise in the solution to a problem about linear preservers.

%%%%%%%%%%%%%%%%%%%%%%%%%%%%%%%%%%

\section{Background on inner and outer functions}\label{S:io}

In this section we summarize some basic facts about inner and outer functions.
Unless otherwise indicated, the proofs may be found in Duren's book \cite{Du70}.

\subsection{Inner functions}

Let $f\in\hol(\DD)$ and let $e^{it}\in\TT$. 
We write
\[
f^*(e^{it}):=\lim_{r\to1^-}f(re^{it})
\]
if this limit exists. By a well-known theorem of Fatou, if $f$ is bounded and
holomorphic on $\DD$, then $f^*$ exists a.e.\ on $\TT$.

A function $h\in\hol(\DD)$ is called  \emph{inner} if it is bounded and satisfies
and $|h^*(e^{it})|=1$ a.e.\ on $\TT$. 
In this case, either $h$ is a unimodular constant or $|h(z)|<1$ for all $z\in\DD$.

A basic example of an inner function is a  \emph{Blaschke product}, namely
\[
B(z):=c\prod_{n\ge1} \frac{|a_n|}{a_n}\frac{a_n-z}{1-\overline{a}_nz}
\quad(z\in\DD),
\]
where $(a_n)$ is a (finite or infinite) sequence in $\DD$ satisfying $\sum_n(1-|a_n|)<\infty$,
and $c$ is a unimodular constant. A further example is a so-called \emph{singular inner function}, 
defined by
\[
S(z):=\exp\Bigl(-\int_\TT \frac{e^{it}+z}{e^{it}-z}\,d\sigma(t)\Bigr)
\quad(z\in\DD),
\]
where $\sigma$ is a finite positive measure on $\TT$ 
that is singular with respect to Lebesgue measure. 

If $B,S$ are as above, then clearly their product is also an inner function. 
Conversely, every inner function
$h$ can be decomposed as $h=BS$, 
where $B$ is a Blaschke product and $S$ is a singular function. 
Moreover, this decomposition is unique: the data $(a_n),c,\sigma$ are uniquely determined by $h$.
This is part of the celebrated canonical factorization theorem.

We shall need the following result about division of inner functions.

\begin{lemma}\label{L:quotient}
Let $h_0,h_1$ be inner functions. Suppose that $h_0(h_1/h_0)^k$ is an inner function
for each integer $k\ge0$. Then $h_1/h_0$ is an inner function.
\end{lemma}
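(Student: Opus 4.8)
The plan is to set $g:=h_1/h_0$, initially just a meromorphic function on $\DD$, and to verify directly the three defining properties of an inner function: that $g$ is holomorphic on $\DD$, that $|g|\le1$ there, and that $|g^*|=1$ a.e.\ on $\TT$. The hypothesis provides, for every integer $k\ge0$, that $h_0g^k$ is inner, hence in particular holomorphic on $\DD$ and bounded there by $1$ in modulus. These two consequences are exactly what drive the first two steps.

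First I would show that $g$ is in fact holomorphic. Suppose $h_0$ has a zero of order $m$ at a point $z_0\in\DD$ and $h_1$ has a zero of order $n$ there. Then $g$ has order $n-m$ at $z_0$, so $h_0g^k$ has order $m+k(n-m)$. If $n<m$, this tends to $-\infty$ as $k\to\infty$, so $h_0g^k$ would have a pole for large $k$, contradicting its holomorphy. Hence $n\ge m$ at every zero of $h_0$, and $g$ has no poles; equivalently, the Blaschke factor of $h_0$ divides that of $h_1$. Next, to bound $g$, fix any $z\in\DD$ with $h_0(z)\ne0$. Since each $h_0g^k$ is inner, $|h_0(z)|\,|g(z)|^k=|h_0(z)g(z)^k|\le1$ for every $k\ge0$; letting $k\to\infty$ forces $|g(z)|\le1$. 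As the zeros of $h_0$ form a discrete set and $g$ is continuous, it follows that $|g|\le1$ throughout $\DD$.

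Finally, since $g$ is now a bounded holomorphic function, its radial boundary values $g^*$ exist a.e.\ by Fatou's theorem. The identity $g\,h_0=h_1$ holds between holomorphic functions on $\DD$, so $g^*h_0^*=h_1^*$ a.e.; because $|h_0^*|=|h_1^*|=1$ a.e., we obtain $|g^*|=1$ a.e. Thus $g$ is holomorphic, bounded by $1$, and unimodular a.e.\ on $\TT$, i.e.\ inner, which is the assertion. I expect the only genuinely delicate point to be the ordering of the second step: one must first secure holomorphy (so that $g$ has no pole at any zero of $h_0$) before invoking the $k\to\infty$ argument, since that argument is valid only at points where $h_0$ does not vanish. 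The conceptual heart of the proof is the observation that letting $k\to\infty$ in $|h_0g^k|\le1$ collapses the whole family of inner-function hypotheses into the single pointwise bound $|g|\le1$.
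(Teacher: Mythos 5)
Your proof is correct, but it takes a genuinely different route from the paper. The paper's argument runs through the canonical factorization theorem: writing $h_k:=h_0(h_1/h_0)^k=B_kS_k$, it attaches to each $h_k$ the positive measure $\mu_k$ on $\overline{\DD}$ built from the zeros of $B_k$ and the singular measure of $S_k$, notes the affine identity $\mu_k=\mu_0+k(\mu_1-\mu_0)\ge0$, divides by $k$ and lets $k\to\infty$ to conclude $\mu_1-\mu_0\ge0$, and then invokes the standard equivalence between this ordering of measures and $h_1/h_0$ being inner. You instead verify the definition of inner directly: holomorphy of $g=h_1/h_0$ by comparing zero orders (if $g$ had a pole, $h_0g^k$ would have one for large $k$); the bound $|g|\le1$ by letting $k\to\infty$ in $|h_0(z)|\,|g(z)|^k\le1$ off the zero set of $h_0$ and using continuity; and $|g^*|=1$ a.e.\ from Fatou's theorem and multiplicativity of radial limits in $gh_0=h_1$. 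Both proofs share the same conceptual engine --- letting $k\to\infty$ converts the family of hypotheses into a single nonnegativity/bound statement --- but yours is more elementary and self-contained: it uses only Fatou's theorem and the fact that inner functions have modulus at most $1$ on $\DD$ (both stated in \S2 of the paper), whereas the paper's proof presupposes the uniqueness of the Blaschke--singular decomposition and the divisibility criterion for inner functions in terms of their associated measures, which it does not prove. The paper's version is shorter granted that machinery and makes the linear dependence on $k$ transparent at the level of the factorization data; yours trades that brevity for transparency at the level of pointwise estimates. Your ordering of the steps (holomorphy before the modulus bound) is indeed the delicate point you identify, and you handle it correctly.
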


\begin{proof}
Set $h_k:=h_0(h_1/h_0)^k$. By hypothesis $h_k$ is inner, so we may factorize it as $h_k=B_kS_k$,
a Blaschke product times a singular factor. 
Let $\mu_k$ be the positive measure on $\overline{\DD}$ formed 
by taking a point mass at each zero of $B_k$ (counted according to multiplicity) 
plus the singular measure defining $S_k$. 
The relation $h_k=h_0(h_1/h_0)^k$ then implies that $\mu_0+k(\mu_1-\mu_0)=\mu_k\ge0$. 
Dividing $k$ and then letting $k\to\infty$, 
we deduce that $\mu_1-\mu_0\ge0$, which amounts to saying that $h_1/h_0$ is an inner function.
\end{proof}

As already remarked, the product of two inner functions is inner.
It is also true that the composition of two (non-constant) inner functions is inner,
though this is not quite so obvious. It is part of the next result, 
whose proof is based on the theorem of Lindel\"of on asymptotic values.

\begin{theorem}\label{T:Lindelof}
Let $f$ be a bounded holomorphic function on $\DD$ and let $h$ be a non-constant inner function.
Then $(f\circ h)^*=f^*\circ h^*$ a.e.\ on $\TT$. Consequently $(f\circ h)$ is inner iff $f$ is inner.
\end{theorem}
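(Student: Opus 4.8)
The plan is to split the statement into two parts: first the boundary identity $(f\circ h)^*=f^*\circ h^*$, from which the equivalence of innerness will then follow. Throughout I would write $g:=f\circ h$, a bounded holomorphic function on $\DD$, and set $a:=h(0)$. Since $h$ is non-constant and inner, $|h(z)|<1$ for every $z\in\DD$, and in particular $|a|<1$.

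First I would establish the identity using Lindel\"of's theorem, which asserts that a bounded holomorphic function on $\DD$ possessing an asymptotic value $L$ along some curve terminating at a boundary point $\zeta_0\in\TT$ has nontangential limit $L$ at $\zeta_0$. By Fatou's theorem, for almost every $\zeta=e^{it}\in\TT$ the radial limits $h^*(\zeta)$ and $g^*(\zeta)$ both exist, with $|h^*(\zeta)|=1$. Fix such a $\zeta$ and set $w_0:=h^*(\zeta)\in\TT$. As $r\to1^-$ the curve $r\mapsto h(r\zeta)$ stays in $\DD$ (because $|h|<1$ there) and tends to $w_0$, so $f$ has the asymptotic value $\lim_{r\to1^-}f(h(r\zeta))=g^*(\zeta)$ along it. Lindel\"of's theorem then yields $f^*(w_0)=g^*(\zeta)$, that is, $(f\circ h)^*(\zeta)=f^*(h^*(\zeta))$, which is the desired identity; note that this already guarantees that $f^*(h^*(\zeta))$ makes sense for a.e.\ $\zeta$.

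For the equivalence of innerness the key is to understand how $h^*$ transports null sets, and here I would compute the push-forward of normalized Lebesgue measure under $h^*$. Applying the mean value property to the bounded holomorphic functions $h^n$ (and the conjugate relations for $n<0$) shows that $\int_\TT (h^*)^n\,dt/2\pi=a^n$ for $n\ge0$; these moments coincide with those of the measure $P_a\,dt/2\pi$, where $P_a(t)=(1-|a|^2)/|e^{it}-a|^2$ is the Poisson kernel. Since a finite measure on $\TT$ is determined by its moments, the push-forward of $dt/2\pi$ under $h^*$ equals $P_a\,dt/2\pi$. Because $|a|<1$, the density $P_a$ is bounded above and below by positive constants, so the push-forward measure is \emph{equivalent} to Lebesgue measure: a set $E\subset\TT$ is null if and only if $\{\zeta:h^*(\zeta)\in E\}$ is null.

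Combining the two ingredients finishes the argument. Since $f$ is bounded, so is $f\circ h$, and hence innerness of either function reduces to the condition that its boundary modulus equals $1$ almost everywhere. Taking $E:=\{w\in\TT:|f^*(w)|\neq1\}$, the measure equivalence shows that $E$ is null exactly when $\{\zeta:h^*(\zeta)\in E\}$ is null, while the identity $|(f\circ h)^*(\zeta)|=|f^*(h^*(\zeta))|$ identifies the latter set, up to a null set, with $\{\zeta:|(f\circ h)^*(\zeta)|\neq1\}$. Thus $f$ is inner if and only if $f\circ h$ is inner. I expect the main obstacle to be the well-definedness of the composition $f^*\circ h^*$: a priori $f^*$ is defined only off a null set, and one must rule out that $h^*$ concentrates positive mass there. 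This is precisely what the push-forward computation resolves, and it is also what forces the appeal to Lindel\"of's theorem, since the curve $r\mapsto h(r\zeta)$ may approach $w_0$ tangentially, so the nontangential limit of $f$ at $w_0$ cannot be read off from it directly.
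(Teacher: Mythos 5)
Your proof is correct, and it is essentially the argument the paper points to: the paper's ``proof'' is simply a citation of \cite[Proposition~2.25]{CM95}, and the paper itself notes that the result ``is based on the theorem of Lindel\"of on asymptotic values,'' which is exactly your first ingredient. Your second ingredient---showing via the moment computation that the push-forward of Lebesgue measure under $h^*$ is the Poisson measure $P_a\,dt/2\pi$, hence mutually absolutely continuous with $dt$---is the standard way to handle the genuine subtlety that $h^*$ must not charge the null set where $f^*$ is undefined or non-unimodular, and it matches the treatment in the cited reference.
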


\begin{proof}
See e.g.\ \cite[Proposition 2.25]{CM95}.
\end{proof}

For $a\in\DD$, let us write $\phi_a(z):=(a-z)/(1-\overline{a}z)$. 
The following result is a weak form of a theorem of Frostman. 

\begin{theorem}\label{T:Frostman}
Let $h$ be an inner function. Then, for all $a$ in a dense subset of $\DD$,
the composition $\phi_a\circ h$ is a Blaschke product.
\end{theorem}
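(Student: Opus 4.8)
The plan is to show that $\phi_a\circ h$ is a Blaschke product for almost every $a\in\DD$ with respect to area measure; since the complement of a null set is dense, this yields the theorem. We may assume $h$ is non-constant, the case of a unimodular constant being trivial. By Theorem~\ref{T:Lindelof} the composition $\phi_a\circ h$ is inner for every $a\in\DD$, so by the canonical factorization it splits as $\phi_a\circ h=B_aS_a$ with $B_a$ a Blaschke product and $S_a$ singular. Recalling the standard fact (a consequence of that factorization) that an inner function $g$ satisfies $\lim_{r\to1}\frac{1}{2\pi}\int_\TT\log|g(re^{it})|\,dt=-\sigma_g(\TT)$, where $\sigma_g\ge0$ is its singular measure, and that $g$ is a Blaschke product precisely when $\sigma_g(\TT)=0$, I would set
\[
\Sigma(a):=-\lim_{r\to1}\frac{1}{2\pi}\int_\TT\log|\phi_a(h(re^{it}))|\,dt=\sigma_{\phi_a\circ h}(\TT)\ge0,
\]
so that the goal becomes $\Sigma(a)=0$ for a.e.\ $a$.

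Next I would integrate $\Sigma$ over a disk $\rho\DD$ (with $\rho<1$) and exploit the splitting $\log|\phi_a(w)|=\log|a-w|-\log|1-\overline{a}w|$, valid for $w\in\DD$. For fixed such $w$ the function $a\mapsto\log|1-\overline{a}w|$ is harmonic on $\rho\DD$, so its average there vanishes by the mean value property; the surviving term is the logarithmic potential
\[
P(w):=\frac{1}{\pi\rho^2}\int_{\rho\DD}\log|a-w|\,dA(a),
\]
which for $|w|>\rho$ equals $\log|w|$ and in particular vanishes on $\TT$. Using Fubini to interchange the $a$- and $t$-integrations, I expect to reduce $\frac{1}{\pi\rho^2}\int_{\rho\DD}\Sigma(a)\,dA(a)$ to $-\lim_{r\to1}\frac{1}{2\pi}\int_\TT P(h(re^{it}))\,dt$.

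The crux is then the observation that $P$ is continuous on $\CC$ and $P\equiv0$ on $\TT$: for a.e.\ $e^{it}$ one has $|h^*(e^{it})|=1$, and $h(re^{it})\to h^*(e^{it})$ by Fatou's theorem, so $P(h(re^{it}))\to P(h^*(e^{it}))=0$. This forces $\int_{\rho\DD}\Sigma(a)\,dA(a)=0$, and since $\Sigma\ge0$ and $\rho<1$ is arbitrary, $\Sigma=0$ a.e.\ on $\DD$, as required.

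The main obstacle is the rigorous justification of the interchanges of limit and integration (the Fubini step and passing $\lim_{r\to1}$ through the $a$-integral). This hinges on a uniform bound: since $P$ is the logarithmic potential of a bounded, compactly supported density, it is continuous on $\CC$, hence bounded on the compact set $\overline{\DD}$ by a constant $C_\rho$ depending only on $\rho$. Consequently $|P(h(re^{it}))|\le C_\rho$ for all $r<1$ and a.e.\ $t$, supplying the dominating function needed for the dominated convergence theorem and legitimizing the whole computation.
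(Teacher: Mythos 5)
Your proof is correct in substance, but it takes a genuinely different route from the paper: the paper does not prove Theorem~\ref{T:Frostman} at all, it simply cites Garnett \cite[Chapter II, Theorem~6.4]{Ga07}, where Frostman's theorem is established in the stronger form that the exceptional set of $a\in\DD$ (those $a$ for which $\phi_a\circ h$ fails to be a Blaschke product) has logarithmic capacity zero. Your argument replaces capacity by area measure: you show the exceptional set is Lebesgue-null, which is weaker than capacity-zero but entirely sufficient for the density statement the paper needs. What your approach buys is self-containedness and elementarity: it uses only the characterization of Blaschke products via $\lim_{r\to1}\frac{1}{2\pi}\int_\TT\log|g(re^{it})|\,dt=0$, the splitting $\log|\phi_a(w)|=\log|a-w|-\log|1-\overline{a}w|$, the mean value property, and the continuity of the logarithmic potential of normalized area measure on $\rho\DD$. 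What the cited approach buys is the sharper (capacity-zero) exceptional set, which matters in other applications but not here.

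One point in your write-up should be tightened. The uniform bound $|P(h(re^{it}))|\le C_\rho$ justifies passing $\lim_{r\to1}$ through the $t$-integral (dominated convergence), and the Fubini step at fixed $r$ needs no domination at all, since the integrand $-\log|\phi_a(h(re^{it}))|$ is non-negative and Tonelli applies. But, as stated, your bound does not justify interchanging $\lim_{r\to1}$ with the $a$-integral: the domination required there is by an integrable function of $a$. The cleanest repair needs no new estimates: $\Sigma(a)\ge0$ is the pointwise limit of the non-negative functions $a\mapsto-\frac{1}{2\pi}\int_\TT\log|\phi_a(h(re^{it}))|\,dt$, so Fatou's lemma together with Tonelli gives
\[
\int_{\rho\DD}\Sigma(a)\,dA(a)\le\liminf_{r\to1}\Bigl(-\pi\rho^2\cdot\frac{1}{2\pi}\int_\TT P(h(re^{it}))\,dt\Bigr)=0,
\]
where the last equality is your dominated convergence step; since $\Sigma\ge0$, this forces $\Sigma=0$ a.e.\ on $\rho\DD$. (Alternatively, $\log|\phi_a\circ h|$ is subharmonic, so its circular means are non-decreasing in $r$, and monotone convergence applies.) With this adjustment the argument is complete.
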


\begin{proof}
See e.g.\ \cite[Chapter II, Theorem~6.4]{Ga07}.
\end{proof}

%%%%%

\subsection{Outer functions and the Smirnov class}

A function $g\in\hol(\DD)$  is called  \emph{outer} if there exists
$G:\TT\to[0,\infty)$ with $\log G\in L^1(\TT)$ such that
\[
g(z)=\exp\Bigl(\int_0^{2\pi}\frac{e^{it}+z}{e^{it}-z}
\log G(e^{it})\,\frac{dt}{2\pi}\Bigr)\
\quad(z\in\DD).
\]
Its radial limits $g^*(e^{it})$ then exist a.e.\ on $\TT$ and satisfy $|g^*|=G$ a.e.

If a function $f$ can be written as a product $f=hg$, 
with $h$ inner and $g$ outer, then this factorization is unique. 
The set of functions $f$ that can be factored in this way is called the \emph{Smirnov class}, 
denoted $N^+$. 
Usually the function $f\equiv0$ is also included in $N^+$. 
Then $N^+$ becomes an algebra. 

There are a number of other characterizations of $N^+$. 
For example, $f\in N^+$ iff it is the quotient 
of a bounded holomorphic function by a bounded outer function. 
Also, $f\in N^+$ if and only if $f\in\hol(\DD)$ and
the family $\{\log^+|f_r|:0<r<1\}$ is uniformly integrable on $\TT$.
(Here, as usual, $f_r(e^{it}):=f(re^{it})$).

Finally, in this section, we note that the Smirnov class  behaves well with respect to composition.

\begin{theorem}\label{T:N+comp}
If $f\in N^+$ and $h:\DD\to\DD$ is holomorphic, then $f\circ h\in N^+$.
\end{theorem}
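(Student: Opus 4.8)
The plan is to prove that $N^+$ is closed under composition with holomorphic self-maps of the disk, i.e. Theorem~\ref{T:N+comp}. The most convenient characterization to work with is the one stated just above: $f\in N^+$ if and only if $f=b/s$, where $b$ is a bounded holomorphic function on $\DD$ and $s$ is a bounded \emph{outer} function. The reason this representation is attractive is that boundedness is obviously preserved under composition, so the only delicate point will be showing that the denominator $s\circ h$ remains outer.

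First I would write $f=b/s$ as above. Composing with $h:\DD\to\DD$ gives $f\circ h=(b\circ h)/(s\circ h)$. Here $b\circ h$ is again a bounded holomorphic function on $\DD$, since $b$ is bounded and $h$ maps $\DD$ into $\DD$, so $b\circ h$ causes no trouble. The entire difficulty is therefore concentrated in the claim that $s\circ h$ is outer, or at least that $(s\circ h)\in N^+$ has no inner factor that would spoil membership of the quotient in $N^+$. I expect this to be the main obstacle, because outerness is a statement about the absence of singular and Blaschke factors, and composition can in principle create zeros or boundary singularities.

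To handle the outer denominator, I would use the fact that an outer function is zero-free on $\DD$ and admits a holomorphic logarithm there; indeed $\log(1/s)=-\log s$ is a well-defined holomorphic function whose real part is the harmonic extension of $-\log|s^*|=\log(1/|s^*|)\ge0$ (after normalizing so that $s$ is bounded by $1$), hence $\log(1/s)$ has nonnegative real part. Composing with $h$, the function $\log(1/(s\circ h))=(\log(1/s))\circ h$ still has nonnegative real part on $\DD$, so $1/(s\circ h)$ is a holomorphic function with positive real part in its logarithm; in particular $1/(s\circ h)$ lies in $N^+$ by the standard fact that functions with a single-valued logarithm of nonnegative real part are quotients of bounded functions. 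More directly, $s\circ h$ is bounded, zero-free, and $1/(s\circ h)$ is again in the Smirnov class, which forces $s\circ h$ to be outer (a bounded function whose reciprocal is in $N^+$ can have no inner factor). Consequently $f\circ h=(b\circ h)/(s\circ h)$ exhibits $f\circ h$ as a bounded holomorphic function divided by a bounded outer function, which is exactly the defining criterion for membership in $N^+$.

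Alternatively, and perhaps more cleanly, I would invoke the uniform-integrability characterization: $f\in N^+$ iff $\{\log^+|f_r|:0<r<1\}$ is uniformly integrable on $\TT$. However, transporting uniform integrability through the composition with $h$ requires controlling how $h$ distributes boundary mass, which is genuinely delicate when $h$ is an arbitrary self-map rather than an inner function. For this reason I expect the logarithmic/quotient approach above to be the path of least resistance, with the one subtle verification being that the reciprocal of a bounded outer function composed with $h$ still belongs to $N^+$; this is where I would be most careful, reducing it to the statement that a holomorphic function on $\DD$ with nonnegative real part is necessarily in $N^+$.
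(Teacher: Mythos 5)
Your reduction is set up correctly: writing $f=b/s$ with $b$ bounded and $s$ bounded outer, the whole problem is indeed concentrated in showing that $s\circ h$ is outer (equivalently, that $1/(s\circ h)\in N^+$), and your parenthetical remark that a bounded zero-free function whose reciprocal lies in $N^+$ must be outer is correct. But the step where you settle this key point contains a genuine error. The ``standard fact'' you invoke --- that a function with a single-valued logarithm of nonnegative real part is a quotient of bounded functions and hence lies in $N^+$ --- is false: being a quotient of bounded functions characterizes the Nevanlinna class $N$, not the Smirnov class $N^+$, and the difference between these two classes is exactly what is at stake here. The standard counterexample is $F(z)=\exp\bigl((1+z)/(1-z)\bigr)=1/S(z)$, where $S$ is the atomic singular inner function: $\log F=(1+z)/(1-z)$ is single-valued with nonnegative real part, $F$ is a quotient of bounded functions, yet $F\notin N^+$. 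Your closing sentence makes the same conflation in another form: it is true that a holomorphic $G$ with $\Re G\ge0$ lies in $N^+$ (such $G$ is even in $H^p$ for every $p<1$), but what your argument requires is that $e^{G}\in N^+$, and this fails for the very same $G=(1+z)/(1-z)$.

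A structural symptom of the gap is that your argument for the key step uses only that $s$ is bounded and zero-free, never that it is outer; if the argument were valid, then taking $h$ to be the identity and $s=S$ a singular inner function it would prove $1/S\in N^+$, which is false. What outerness actually provides, and what must be shown to survive composition, is that the nonnegative harmonic function $u:=-\log|s|$ has no singular part, i.e.\ $u=P[w]$ is the Poisson integral of a function $w\ge0$ in $L^1(\TT)$. This property \emph{is} preserved under composition, but one has to use it: setting $u_n:=P[\min(w,n)]$, each $u_n$ is a bounded nonnegative harmonic function with $u_n\uparrow u$ pointwise, so each $u_n\circ h$ is bounded and harmonic and $u_n\circ h\uparrow u\circ h$; an increasing, pointwise-finite limit of bounded nonnegative harmonic functions has uniformly integrable dilations (compare the means at the origin), and hence $u\circ h=-\log|s\circ h|$ has no singular part, which is precisely the statement that $s\circ h$ is outer. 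Equivalently, this furnishes the uniform integrability of $\{\log^+|(f\circ h)_r|\}$ that you correctly identified as the other route. Note also that the paper itself does not prove this theorem but cites Choa--Kim--Shapiro \cite[\S 2.6]{CKS00}; supplying a self-contained proof along your lines is a reasonable goal, but the nonnegative-real-part shortcut cannot work, because that condition is blind to the distinction between $N$ and $N^+$.
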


\begin{proof}
See e.g.\ \cite[\S2.6]{CKS00}.
\end{proof}

%%%%%%%%%%%%%%%%%%%%%%%%%%%%%%%%%%

\section{Proofs}\label{S:proofs}

\begin{proof}[Proof of Theorem~\ref{T:main}]
For each $k\ge0$, the function $z^k$ is a finite Blaschke product. Consequently $h_k:=T(z^k)$ is inner.
Also, for each $a\in\DD$, the function $\phi_a(z):=(a-z)/(1-\overline{a}z)$ is a finite Blaschke product,
so $T(\phi_a)$ is inner. Now, for each $a\in\DD$,
\[
\phi_a(z)=\sum_{k=0}^\infty (a\overline{a}^kz^k-\overline{a}^kz^{k+1}) \quad(z\in\DD).
\]
By the condition (X3), the series converges in $X$, and by (X1) it converges to $\phi_a$.  Consequently,
\[
(T\phi_a)(z)=\sum_{k=0}^\infty (a\overline{a}^kh_k(z)-\overline{a}^kh_{k+1}(z)) \quad(z\in\DD).
\]

Fix countable dense subsets 
$D_1$ of $(0,1)$ and $D_2$ of $\TT$. Then there exists a subset $E$ of $\TT$ of measure zero
such that, for all $a=r\zeta~(r\in D_1,~\zeta\in D_2)$,
the radial limit  $(T\phi_a)^*$  exists at each point of $\TT\setminus E$ and has modulus~$1$. Adding a further set of measure zero to $E$, if necessary, we may further suppose that, for all $k\ge0$, the radial limits  $h_k^*$ exist at each point of $\TT\setminus E$ and have modulus~$1$. Hence, on $\TT\setminus E$, we have, for all $r\in D_1$ and $\zeta\in D_2$,
\[
\Bigl|\sum_{k=0}^\infty (r^{k+1}\zeta^{1-k} h_k^*-r^k\zeta^{-k}h_{k+1}^*)\Bigr|^2=1.
\]
Expanding, we get
\[
\begin{split}
\sum_{k,l=0}^\infty \Bigl(&r^{k+l+2}\zeta^{l-k} h_k^*\overline{h_l^*}
+r^{k+l}\zeta^{l-k}h_{k+1}^*\overline{h_{l+1}^*}\\
&-r^{k+l+1}\zeta^{l-k+1}h_k^*\overline{h_{l+1}^*}
-r^{k+l+1}\zeta^{l-k-1}h_{k+1}^*\overline{h_l^*}\Bigr)=1.
\end{split}
\]
As this holds for all $r\in D_1$ and $\zeta\in D_2$,
the coefficient of $r^0\zeta^0$ must be $1$, and the coefficients of $r^n\zeta^m$ for all other pairs $n,m$ must be zero. This leads  to the following relations (holding on $\TT\setminus E$):
\[
h_{k-1}^*\overline{h_{l-1}^*}+h_{k+1}^*\overline{h_{l+1}^*}=2h_k^*\overline{h_l^*} \quad(k,l\ge1).
\]
Note that the two summands on the left-hand side both have modulus $1$, and they add up to the right-hand side which has modulus $2$. It follows that the summands on the left-hand side are in fact equal, and so we deduce that
\[
h_k^*\overline{h_l}^*=h_{k-1}^*\overline{h_{l-1}^*} \quad(k,l\ge1).
\]
In particular, taking $l=k-1$, we see that $h_k^*\overline{h_{k-1}^*}=h_1^*\overline{h_0^*}$ for all $k\ge1$.
We deduce that $h_k^*=h_0^*(h_1^*/h_0^*)^k$ on $\TT\setminus E$  for all $k\ge0$.
Inner functions whose radial boundary values agree a.e.\ on $\TT$ are equal everywhere on $\DD$.
Consequently $h_k=h_0(h_1/h_0)^k$  on $\DD$ for all $k\ge0$.

By Lemma~\ref{L:quotient}, the quotient  $h_1/h_0$ is an inner function.
Set $\psi:=h_0$ and $\phi:=h_1/h_0$. 
By what we have proved, $\phi$ and $\psi$ are inner functions, and $T(z^k)=\psi\phi^k$ for all $k\ge0$. 
Therefore $T(f)=\psi.(f\circ\phi)$ for each polynomial~$f$.

There are now two cases to consider. 
If $\phi$ is non-constant, then,
since polynomials are dense in $X$ 
and $T:X\to\hol(\DD)$ is continuous, 
we have $Tf=\psi.(f\circ\phi)$ for all $f\in X$. 
If, on the other hand, $\phi$ is a unimodular constant $\alpha$,
then $Tf=f(\alpha)\psi$ for all polynomials $f$,
and hence also for all $f\in X\cap \hol(\overline{\DD})$, because
the Taylor series of each such $f$ converges to $f$ in  the norm of $X$.
Clearly we have $\dim(T(X))>1$ in the first case and $\dim(T(X))=1$ in the second.
This yields the cases (i) and (ii) as stated in the theorem.
\end{proof}

\begin{proof}[Proof of Corollary~\ref{C:inner}]
By Theorem~\ref{T:Lindelof}, if $f$ is inner, then $f\circ \phi$ is inner, and hence $Tf=\psi.(f\circ\phi)$ is inner as well. 

Conversely, if $Tf$ is inner, then clearly $Tf/\psi=(f\circ \phi)$ is inner. If we knew that $f$ was bounded, then we could apply Theorem~\ref{T:Lindelof} to deduce that $f$ is inner. However, all we know, \textit{a priori}, is that $f$ is bounded on $\phi(\DD)$. To show that $f$ is bounded on $\DD$, we need to prove that $\phi(\DD)$ is dense in $\DD$. For this we use Theorem~\ref{T:Frostman}. According to that theorem, for a dense set of $a\in\DD$, the composition $\phi_a\circ\phi$ is a Blaschke product. As $\phi_a\circ\phi$ non-constant, it must have a zero, which amounts to saying that $a\in\phi(\DD)$. Thus $\phi(\DD)$ is indeed dense in $\DD$, and the result is proved.
\end{proof}

\begin{proof}[Proof of Theorem~\ref{T:surj}]
(i) By Theorem~\ref{T:N+comp}, if $f\in N^+$ then $f\circ\phi\in N^+$. Consequently,
\[
T(X\cap N^+)\subset \psi. N^+.
\]
Since inner-outer factorization is unique, the only way that the right-hand side can contain an outer function is if $\psi$ is a unimodular constant.

(ii) Assume now that $\psi$ is constant. If $T(X)$ separates points of $\DD$, then clearly $\phi$ has to be injective.
Using Theorem~\ref{T:Frostman}, we know that there exists an automorphism $\phi_a$ of $\DD$ such that $\phi_a\circ \phi$ is a Blaschke product. Then $\phi_a\circ\phi$ is also injective, so it must be a Blaschke product of degree one,
in words, an automorphism of $\DD$. Therefore $\phi$ itself is an automorphism of $\DD$.
\end{proof}

\begin{proof}[Proof of Theorem~\ref{T:Hp}]
The fact that (i) implies (ii) follows immediately from Theorem~\ref{T:main}.
(Note that, for $X=H^p$, case (ii) in Theorem~\ref{T:main} never occurs, as remarked immediately after 
the statement of that theorem.)

The fact that (ii) implies (i) is a combination of two results.
The first is the well-known fact that $Tf:=\psi.(f\circ \phi)$ is a bounded operator on $H^p$
(this is essentially Littlewood's subordination theorem, see e.g.\ \cite[Theorem~1.7]{Du70}).
The second is the fact that $T$ preserves inner functions, which is an easy  consequence of Theorem~\ref{T:Lindelof}.

The last part of the theorem follows from Theorem~\ref{T:surj},
together with the fact that $H^p\subset N^+$ for all $p\ge1$.
\end{proof}

\begin{proof}[Proof of Theorem~\ref{T:D}]
To show that (i) implies (ii), we first use Theorem~\ref{T:main} to show that 
there exist inner functions $\phi,\psi$, with $\phi$ non-constant, such
that $Tf=\psi.(f\circ\phi)$ for all $f\in\cD$. (Note that, just as for Hardy spaces,
case (ii) of Theorem~\ref{T:main} never occurs when $X=\cD$.)
Also, we have  $\psi=T(1)$ and $\psi.\phi=T(z)$,
so, as $T$ maps finite Blaschke products to finite Blaschke products,
 both $\phi$ and $\psi$ are finite Blaschke products.

For the reverse implication, that (ii) implies (i), the only point in doubt is 
whether $Tf:=\psi.(f\circ\phi)$ maps $\cD$ boundedly into itself.
That this is indeed true follows from the fact that, if $\phi$ is a Blaschke product
of degree~$n$, then $\cD(f\circ \phi)=n\cD(f)$ for all $f\in\cD$ (see e.g.\ \cite[Lemma~6.2.2]{EKMR14}).

Once again, the last part of the theorem is a consequence of Theorem~\ref{T:surj},
together with the fact that $\cD\subset N^+$.
\end{proof}

%%%%%%%%%%%%%%%%%%%%%%%%%%%%%%%%%%

\section{Concluding remarks and questions}\label{S:conclusion}

\subsection{Non-separable spaces}
The proof of Theorem~\ref{T:main} does not work for $H^\infty$, nor for $\cB$ (the Bloch space) or $\bmoa$
(functions of  bounded mean oscillation), because the polynomials are not dense in any of these spaces.

\begin{question}
Is Theorem~\ref{T:main} still valid for $X=H^\infty,\cB$ and $\bmoa$?
\end{question}

\subsection{Automatic continuity}
It was shown in \cite{MR15} that every linear map $T:H^p\to H^p$ preserving outer functions, whether it is continuous or not,  is necessarily a weighted composition operator. A similar result for the Dirichlet space (with outer functions replaced by nowhere-vanishing functions) was recently obtained in \cite{MRR17}. These results are quite a bit more difficult
without the continuity of $T$. We are led to pose the following question.

\begin{question}
Do Theorems~\ref{T:Hp} and \ref{T:D} hold without the continuity of $T$?
\end{question}

\subsection{Singular inner functions}
As remarked in Corollary~\ref{C:inner}, the fact that finite Blaschke products are mapped to inner functions implies that the same is true for general inner functions. What if we start from a different subclass? For example:

\begin{question}
Let $T:H^2\to H^2$ be a continuous linear map such that $Tf$ is inner whenever $f$ is a singular inner function. Does $T$ necessarily map all inner functions to inner functions?
\end{question}

In this context,  is perhaps worth noting that finite Blaschke products are `far' from all the other inner functions.  For example, it is not hard to see that the set of finite Blaschke products is a closed subset of the set of inner functions in the norm of $H^\infty$, and  that it is a distance at least one from the set of all other inner functions.

\subsection*{Acknowledgements}
JM was supported by an NSERC Discovery Grant. 
TR was supported by an NSERC Discovery Grant and a Canada Research Chair.

%%%%%%%%%%%%%%%%%%%%%%%%%%%%%%%%%%

\end{document}